\newtheorem{theorem}{Theorem}[section]
\newtheorem{lemma}[theorem]{Lemma}
\newtheorem{corollary}[theorem]{Corollary}
\def\PK{\mathbb{P}^{1}(K)}
\def\N{\mathbb{N}}
\def\Z{\mathbb{Z}}
\def\Q{\mathbb{Q}}
\def\C{\mathbb{C}}
\def\PQp{\mathbb{P}^{1}(\Q_p)}
\def\PCp{\mathbb{P}^{1}(\C_p)}
\def\Qp{\mathbb{Q}_{p}}
\def\F{\mathbb{F}}
\begin{document}

\title[ $p$-adic rational maps having empty Fatou set ]{$p$-adic rational maps having empty Fatou set }

\date{} 

\author[A. H. FAN]{Aihua Fan}
\address[A. H. FAN]{School of Mathematics and Statistics, Central China Normal University, 430079, Wuhan, China \& LAMFA, UMR 7352 CNRS, University of Picardie,
33 rue Saint Leu, 80039 Amiens, France}
\email{ai-hua.fan@u-picardie.fr}

\author[S. L. FAN]{Shilei Fan}

\address[S. L. FAN]{School of Mathematics and Statistics \& Hubei Key Laboratory of Mathematical Sciences, Central  China Normal University, Wuhan, 430079, China }
\email{slfan@mail.ccnu.edu.cn}

\author[S. L. FAN]{Yahia Mwanis}
\address[]{School of Mathematics and Statistics, Central  China Normal University, Wuhan, 430079, China}
\email{yahiakfu2015@gmail.com}

\author[Y. F. Wang]{Yuefei Wang}
\address[Y. F. Wang]{College of Mathematics and Statistics, Shenzhen University, Shenzhen 518060, China \& Academy of Mathematics and System Sciences, Chinese Academy of Sciences, Beijing 100190, China }
\email{wangyf@math.ac.cn}

\thanks{A.H. FAN  was supported by NSF of China (Grant No. 12231013 and 11971192); S. L. FAN was partially supported by NSFC (grants No. 12331004  and No. 11971190) and Fok Ying-Tong Education Foundation, China (grant No.171001).  Y. F. WANG was partially supported by NSFC (grants No. 12231013) and NCAMS}

\begin{abstract}
On any  finite algebraic extension $K$ of  the field $\Q_p$ of $p$-adic numbers, there exist rational maps  $\phi\in K(z)$ such that dynamical system $(\mathbb{P}^{1}(K),\phi)$  has empty Fatou set, i.e. the iteration family $\{\phi^n: n\geq 0\}$ is nowhere equicontinuous.
\end{abstract}
\subjclass[2010]{Primary 37P05; Secondary 11S82, 37B05.}
\keywords{$p$-adic field, rational maps, Fatou set, empty}
\maketitle

\section{Introduction}
Let $p$ be a prime number and  $\Q_p$ be the field of $p$-adic numbers.  Let  $\phi \in \Q_p(z)$ be a rational map of degree at least two, which   induces  a continuous map on the projective line $\PQp$ over $\Q_p$. Recall that  the space $\PQp$  is equipped with the spherical metric.

Let $\C_p$ be the completion of an algebraic closure of $\Q_p$.  Naturally, $\phi$  induces a continuous map from $\PCp$ to itself, where
$\PCp$ is the projective line over $\C_p$ with the spherical metic.   The Fatou-Julia theory  of rational maps $\phi\in \C_p$ on $\PCp$ have been well developed  by  many authors
\cite{Benedetto-Fatou-Component, Benedetto-wandering-domain-polynomial, Hsia-periodic-points-closure, Rivera-Letelier-Dynamique-rationnelles-corps-locaux2003, Silverman-dynamics-book}. Benedetto \cite{BenedettoHM2001ETDS} showed  that $\phi$ has at least one non-repelling fixed point in $\PCp$, which is a dynamical stable point. Hence, the Fatou set of the system $(\PCp,\phi)$ is  not empty.

 On the other hand, the minimality and minimal decomposition  dynamical system $(\mathbb{P}^1(\Q_p),\phi)$ when $\phi$ have good reduction or potential good reduction were investigate in  \cite{FFLW2014,FFLW2017}. Remark that these dynamical systems have empty Julia set which means that the whole space $\PQp$ is the Fatou set.

 In this note, we construct rational maps  $\phi$ such that the Fatou set of the system   $(\PQp,\phi)$ is empty.

For the polynomial $f(z)=(z^p-z)/p$,
 it is known in \cite{WS98} that the system $(\Z_p,f )$ is topologically conjugated  the one-side full shift system $(\Sigma_{p},\sigma)$. See \cite{FLWZ2007} for a general study of the such systems.  Our idea is to construct  a rational map $\phi$ satisfying the following two properties: \\
 \indent (1) $\phi$ maps the points outside $\Z_p$ into $\Z_p$; \\
  \indent(2) $\phi$ is a small perturbation of $f$ on $\Z_p$ such that $(\Z_p,\phi)$ is topologically conjugated to $(\Z_p,f).$

Actually, on each finite extension $K$ of $\Q_p$, we can construct rational maps $\phi$  such that the Fatou set   of the system $(\PK, \phi)$ is empty.  Let $K$ be a finite extension of $\Q_{p}$ and let $d=[K:\Qp]$. Let $e$ be the ramification index of $K$ over $\Q_p$ (see Section \ref{sec2}). It is known that $e\mid d$ and let $f=d/e$.
\begin{theorem}\label{main-thm}
Let $K$ be a finite algebraic extension of  the field $\Q_p$ of $p$-adic numbers.   For positive integers  $m> n$, let
\[\phi(z)=\frac{z^{p^f}-z}{\pi+z^{p^{mf}}-z^{p^{nf}}}.\]
Then the dynamical system $(\PK,\phi)$  has empty  Fatou set.
\end{theorem}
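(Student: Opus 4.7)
The plan is to follow the two-step strategy outlined in the introduction, generalized from $\Q_p$ to the arbitrary finite extension $K$. Writing $\mathcal{O}_K$ for the ring of integers, $\pi$ for the uniformizer used in the formula, and identifying $\mathcal{O}_K/\pi\mathcal{O}_K$ with $\F_{p^f}$, I would partition $\PK=\mathcal{O}_K\sqcup(\PK\setminus\mathcal{O}_K)$ and prove: (i) $\phi$ sends $\PK\setminus\mathcal{O}_K$ into $\pi\mathcal{O}_K$, absorbing the exterior after one iterate; and (ii) $(\mathcal{O}_K,\phi)$ is topologically conjugate to the one-sided full shift $(\Sigma_{p^f},\sigma)$. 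Combined with a non-archimedean open-mapping argument, these will give nowhere-equicontinuity of $\{\phi^n\}$ on $\PK$.

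For step (i), if $z\in K$ with $|z|_K>1$, then three applications of the strict ultrametric inequality (using $m>n\ge 1$ and $|\pi|_K<1$) give $|z^{p^f}-z|_K=|z|_K^{p^f}$ and $|\pi+z^{p^{mf}}-z^{p^{nf}}|_K=|z|_K^{p^{mf}}$. Hence $|\phi(z)|_K=|z|_K^{p^f-p^{mf}}<1$, so $\phi(z)\in\pi\mathcal{O}_K$; also $\phi(\infty)=0$.

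For step (ii), on $\mathcal{O}_K$ iterated Frobenius on $\F_{p^f}$ gives $z^{p^{mf}}\equiv z^{p^{nf}}\equiv z\pmod\pi$. A sharper fact, namely that $p^{mf}-p^{nf}=p^{nf}(p^{(m-n)f}-1)$ is divisible by the order $p^f(p^f-1)$ of $(\mathcal{O}_K/\pi^2\mathcal{O}_K)^*$, in fact forces $z^{p^{mf}}-z^{p^{nf}}\in\pi^2\mathcal{O}_K$ for every $z\in\mathcal{O}_K$ (the containment for $|z|_K\le|\pi|_K$ being immediate by order-of-magnitude). Consequently the denominator equals $\pi(1+\pi w(z))$ with $w(z)\in\mathcal{O}_K$, so
\[
\phi(z)=\frac{z^{p^f}-z}{\pi}\,\bigl(1+\pi w(z)\bigr)^{-1}\equiv\frac{z^{p^f}-z}{\pi}\pmod\pi,
\]
exhibiting $\phi$ as a unit perturbation of the Woodcock--Smart-type map $f_K(z)=(z^{p^f}-z)/\pi$ on $\mathcal{O}_K$, whose conjugacy with $\sigma$ is the $K$-analog covered by the general theory cited in the introduction. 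I would then apply Hensel's lemma to the polynomial $\Phi_w(z):=(z^{p^f}-z)-w(\pi+z^{p^{mf}}-z^{p^{nf}})$ for each $w\in\mathcal{O}_K$: its reduction modulo $\pi$ is $\prod_{a\in\F_{p^f}}(z-a)$, with $p^f$ simple roots, each lifting uniquely to a $z\in a+\pi\mathcal{O}_K$ satisfying $\phi(z)=w$. This makes $\phi$ a bijection from each residue disk $a+\pi\mathcal{O}_K$ onto $\mathcal{O}_K$, with expansion factor $|\pi|_K^{-1}$, so that the itinerary $z\mapsto(\overline{\phi^k(z)})_{k\ge 0}\in\F_{p^f}^{\N}$ is a homeomorphism conjugating $\phi$ to $\sigma$.

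To conclude emptiness of the Fatou set, expansivity of $\sigma$ says that in every ball around $z_0\in\mathcal{O}_K$ one finds $z$ with $\phi^n(z)$ and $\phi^n(z_0)$ landing in distinct residue disks for some $n$, obstructing equicontinuity at $z_0$. For $z_0\in\PK\setminus\mathcal{O}_K$, any neighborhood $U$ of $z_0$ has open image $\phi(U)\subset\mathcal{O}_K$ (non-constancy plus the non-archimedean open-mapping property), hence contains a ball about $\phi(z_0)$; applying the interior argument at $\phi(z_0)$ transports non-equicontinuity back to $z_0$. The technical heart, and main obstacle, is step (ii): controlling the perturbation precisely enough for $\phi\equiv f_K\pmod\pi$ while verifying uniformity of the Hensel lifting in $w$. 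The specific form $\pi+z^{p^{mf}}-z^{p^{nf}}$ of the denominator is engineered so that its dominant term is $\pi$ on $\mathcal{O}_K$ (enabling step (ii)) but is $z^{p^{mf}}$ on $\PK\setminus\mathcal{O}_K$ (enabling step (i)), and the hypothesis $m>n\ge 1$ is exactly what switches between the two regimes.
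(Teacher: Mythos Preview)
Your proposal is correct and follows the paper's two-step strategy (absorb the exterior into $\mathcal{O}_K$, then conjugate $(\mathcal{O}_K,\phi)$ to the full shift via itineraries), but the technical implementation differs in a couple of places worth noting. For the key estimate $z^{p^{mf}}-z^{p^{nf}}\in\pi^2\mathcal{O}_K$, the paper splits into $z\in\pi\mathcal{O}_K$ versus $z$ a unit and handles the latter via the $(p^f{-}1)$-th roots of unity plus a binomial expansion (its Lemma~2.2), whereas you invoke Lagrange's theorem on $(\mathcal{O}_K/\pi^2\mathcal{O}_K)^*$, which is slicker. For the conjugacy, the paper directly computes the local scaling $|\phi(x)-\phi(y)|_p=|\pi|_p^{-1}|x-y|_p$ on each residue disk (its Lemma~3.1, via the auxiliary Lemmas~2.3--2.5) and then cites Woodcock--Smart; you instead obtain the bijection of each residue disk onto $\mathcal{O}_K$ via Hensel applied to $\Phi_w(z)$ (correctly, since the reduced derivative is the unit $-1$), though the expansion factor $|\pi|_p^{-1}$ you assert still needs a line of justification---it does not follow from the bijection alone, but drops out of either the paper's computation or the implicit-function estimate $|\partial\Phi_w/\partial w|_p=|\pi|_p$ against $|\partial\Phi_w/\partial z|_p=1$. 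Your treatment of non-equicontinuity at points of $\PK\setminus\mathcal{O}_K$ (open mapping plus pullback through one iterate) is more explicit than the paper's, which simply states the conclusion once the shift conjugacy on $\mathcal{O}_K$ is established.
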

Hence, it follows immediately that there exist rational maps $\phi\in \Q_p(Z)$ such that the dynamics $(\PQp,\phi)$ have empty Fatou set.
 \begin{corollary}\label{main-thm-Qp}
 For positive integer $m>n\geq 1$, let
 \[\phi(z)=\frac{z^{p}-z}{p+z^{p^{m}}-z^{p^{n}}}.\]
  Then the dynamical system $(\PQp,\phi)$ has empty  Fatou set.
\end{corollary}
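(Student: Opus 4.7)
Corollary \ref{main-thm-Qp} is the specialization $K=\Qp$ of Theorem \ref{main-thm} (so that $\pi=p$ and $e=f=1$), and so it suffices to outline a proof of the theorem. The plan, following the introduction, is to reduce the dynamics of $\phi$ on $\PK$ to the one-sided full shift $(\Sigma_{p^f},\sigma)$, which is expansive and hence nowhere equicontinuous.

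\emph{Step 1 (every orbit enters $O_K$).} For $z\in\PK$ with $|z|_K>1$, the inequalities $p^{mf}>p^{nf}\ge p^f$ (which follow from $m>n\ge 1$) combine with the ultrametric triangle inequality to yield
\[
|\phi(z)|_K=\frac{|z^{p^f}-z|_K}{|\pi+z^{p^{mf}}-z^{p^{nf}}|_K}=\frac{|z|_K^{p^f}}{|z|_K^{p^{mf}}}=|z|_K^{p^f-p^{mf}}<1.
\]
Hence $\phi(\PK\setminus O_K)\subset \pi O_K$ and $\phi(\infty)=0$; every orbit enters $O_K$ after at most one step.

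\emph{Step 2 (shift coding on $O_K$).} On $O_K$ I would write
\[
\phi(z)=\frac{f(z)}{1+h(z)},\qquad f(z)=\frac{z^{p^f}-z}{\pi},\quad h(z)=\frac{z^{p^{mf}}-z^{p^{nf}}}{\pi},
\]
both of which take $O_K$ into $O_K$ because $z^{p^{kf}}\equiv z\pmod\pi$ for every $k\ge 1$. The extended Woodcock--Smart result cited in the introduction gives that $(O_K,f)$ is topologically conjugate to $(\Sigma_{p^f},\sigma)$ via the Markov partition of $O_K$ into the $p^f$ residue disks $D_a=a+\pi O_K$ (with $a$ running over a set of representatives of $\F_{p^f}\cong O_K/\pi O_K$), on each of which $f$ is a bijection onto $O_K$. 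I would then verify that $\phi$ respects the same partition structure: for every such $a$, $\phi|_{D_a}$ is a bijection of $D_a$ onto $O_K$ compatible with the symbolic coding, yielding $(O_K,\phi)\cong(\Sigma_{p^f},\sigma)$. Combined with Step 1, a Fatou point $z\in\PK$ of $\phi$ would force $\phi(z)\in O_K$ to be Fatou as well, contradicting the expansivity of $\sigma$; hence the Fatou set of $(\PK,\phi)$ is empty.

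\emph{Main obstacle.} The substance of the proof lies in the Markov bijectivity claim for $\phi$ in Step 2. Because $h$ is only integral (not $\pi$-adically small), $\phi$ is not a quantitative perturbation of $f$, so the bijection $\phi|_{D_a}\colon D_a\to O_K$ has to be checked by a direct analysis on each disk (for example via the derivative estimate $|\phi'(z)|_K=|\pi+z^{p^{mf}}-z^{p^{nf}}|_K^{-1}\ge |\pi|_K^{-1}$, which shows uniform expansion on $O_K$ away from the zeros of the denominator) rather than through any soft stability argument. A further delicate point is that, by its Newton polygon, the denominator $\pi+z^{p^{mf}}-z^{p^{nf}}$ has $p^{mf}-p^{nf}$ zeros of valuation zero over $\overline{K}$, so $\phi$ can have poles inside $O_K$; such poles have to be absorbed into the $\PK$-dynamics (a pole maps to $\infty$ and then, by Step 1, to a point of $\pi O_K\subset D_0$), which is precisely why Steps 1 and 2 are interlocked in the argument.
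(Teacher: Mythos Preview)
Your two-step outline is exactly the paper's strategy, and Step~1 is correct. The problem is your ``Main obstacle'': it is a phantom created by a missed estimate, and that estimate is the one lemma on which the whole proof turns.

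You assert that $h(z)=(z^{p^{mf}}-z^{p^{nf}})/\pi$ is ``only integral (not $\pi$-adically small)'', and then worry about poles of $\phi$ inside $\mathcal{O}_K$. In fact $h(z)\in\pi\mathcal{O}_K$ for every $z\in\mathcal{O}_K$. This is a one-line strengthening of the Fermat congruence you already used: writing $z=\xi+y$ with $\xi$ a $(p^f-1)$-th root of unity and $y\in\pi\mathcal{O}_K$, one has $\xi^{p^{mf}}=\xi^{p^{nf}}=\xi$, and every remaining term in the binomial expansions of $(\xi+y)^{p^{mf}}-(\xi+y)^{p^{nf}}$ carries either a factor $p$ (from a binomial coefficient) or a factor $y^2$; hence $|z^{p^{mf}}-z^{p^{nf}}|_p<|\pi|_p$, i.e.\ $z^{p^{mf}}-z^{p^{nf}}\in\pi^2\mathcal{O}_K$. (For $|z|_p<1$ this is immediate since $p^{nf}\ge 2$.) Consequently the denominator $\pi+z^{p^{mf}}-z^{p^{nf}}$ has absolute value exactly $|\pi|_p$ on all of $\mathcal{O}_K$: its Newton-polygon roots of valuation $0$ over $\overline{K}$ never lie in $K$, and $\phi$ has \emph{no} poles in $\mathcal{O}_K$.

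Once you have $h(z)\in\pi\mathcal{O}_K$, the map $\phi$ \emph{is} a soft perturbation of $f(z)=(z^{p^f}-z)/\pi$ and the Markov bijectivity you want is immediate. For $x,y$ in the same residue disk one checks $|(M_1(x)-x)-(M_1(y)-y)|_p=|x-y|_p$ while $|h(x)-h(y)|_p<|x-y|_p$ (using $|M_k(x)-M_k(y)|_p\le|\pi|_p^k|x-y|_p$), and then the elementary identity $\frac{a}{1+u}-\frac{b}{1+v}=\frac{(a-b)(1+u)+a(v-u)}{(1+u)(1+v)}$ with $u=h(x),v=h(y)\in\pi\mathcal{O}_K$ gives
\[
|\phi(x)-\phi(y)|_p=|\pi|_p^{-1}\,|x-y|_p .
\]
This exact scaling (the paper's Lemma~3.1) shows at once that $\phi$ maps each residue disk bijectively onto $\mathcal{O}_K$, so the itinerary map $z\mapsto(\overline{\phi^i(z)})_{i\ge 0}$ is a conjugacy to the full shift on $p^f$ symbols. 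Your sketch is therefore right in shape; the missing piece is precisely the estimate $|z^{p^{mf}}-z^{p^{nf}}|_p<|\pi|_p$, which dissolves the obstacle you flagged and replaces your proposed ``direct analysis on each disk'' by a single clean scaling identity.
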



\section{Preliminaries}\label{sec2}
In this section,  we recall some notations and some useful lemmas concerning the roots of unity and  monomials.
 Let $K$ be a finite
extension of $\Q_{p}$ and let $d=[K:\Qp]$
denote the dimension of $K$ as a vector space over $\Qp$.   The
extended absolute value on $K$ is still denoted by $|\cdot|_{p}$.
For $x\in K^{*}:=K\setminus \{0\}$, $v_{p}(x):=-\log_{p}(|x|_{p})$
defines the valuation of $x$, with convention $v_{p}(0):=\infty$.
One can show that there exists a unique positive integer $e$ which
is called \emph{ramification index} of $K$ over $\Qp$, such that
$$v_{p}(K^*)=\frac{1}{e}\Z.$$(Sometimes, we write the image  of $K^*$ under $| \cdot |_p$ as $| K^* |_p=p^{{\mathbb{Z}}/{e}}$. )

The extension $K$ over $\Qp$ is said to
be \emph{unramified} if $e=1$, \emph{ramified} if $e>1$ and
\emph{totally ramified} if $e=d$.  An element $\pi\in K$ is called a
\emph{uniformizer} if $v_{p}(\pi)=1/e$. For convenience of notation, we
write $$ v_{\pi}(x):=e\cdot v_{p}(x)$$
 for $x\in K$ so that
$v_\pi(x)\in \mathbb{Z}$.
Let $\mathcal{O}_{K}:=\{x\in
K : |x|_p\leq 1\}$, whose elements are called  \emph{integers} of $K$. Let
$\mathcal{P}_{K}:=\{x\in K: |x|_{p}<1\}$, which is the maximal ideal
of $\mathcal{O}_{K}$. The \emph{residual class field} of $K$ is
$\mathbb{K}:=\mathcal{O}_{K}/\mathcal{P}_{K}$. Then
$\mathbb{K}=\mathbb{F}_{p^f}$,  the finite field of $p^{f}$ elements
where $f=d/e$. Let $C=\{c_{0},c_{1},\cdots,c_{p^f-1}\}$ be a fixed
complete set of representatives of the cosets of $\mathcal{P}_{K}$
in $\mathcal{O}_{K}$. Then every $x\in K$ has a unique $\pi$-adic
expansion of the form
\begin{equation}\label{piadic}
   x=\sum_{i=i_{0}}^{\infty}a_i\pi^{i}
\end{equation}
where $i_{0}\in \Z$ and $a_i\in C$ for all $i\geq i_{0}$.

\begin{lemma}[\cite{AK09}, p.56] The integral ring $\mathcal{O}_{K}$ contains the $(p^f-1)$-th roots of unity. Moreover, for any $x\in \mathcal{O}_{K}\setminus\mathcal{P}_{K}$, the  open ball   $D(x,1)$  contains exact  one  $(p^f-1)$-th root of unity.
\end{lemma}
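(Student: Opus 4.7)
The plan is to derive both assertions from Hensel's lemma applied to the polynomial $P(X) = X^{p^f - 1} - 1 \in \mathcal{O}_K[X]$. The key observation is that the residual field $\mathbb{K} = \mathbb{F}_{p^f}$ has multiplicative group of order exactly $p^f - 1$, so every nonzero element of $\mathbb{K}$ is already a root of the reduction $\bar P(X) = X^{p^f - 1} - 1$ in $\mathbb{K}[X]$.

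First I would verify that $\bar P$ splits into $p^f - 1$ distinct linear factors over $\mathbb{K}$. Distinctness follows because the derivative $\bar P'(X) = (p^f - 1) X^{p^f - 2}$ is nonzero at every $\bar a \in \mathbb{K}^*$: indeed $p^f - 1 \not\equiv 0 \pmod p$, so $(p^f - 1)$ is a unit in $\mathcal{O}_K$, and $\bar a \neq 0$. Hensel's lemma then lifts each $\bar a \in \mathbb{K}^*$ to a unique $\zeta_a \in \mathcal{O}_K$ satisfying $\zeta_a^{p^f - 1} = 1$ and $\zeta_a \equiv \bar a \pmod{\mathcal{P}_K}$. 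This produces $p^f - 1$ distinct $(p^f - 1)$-th roots of unity in $\mathcal{O}_K$, which settles the first assertion.

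For the second assertion, given $x \in \mathcal{O}_K \setminus \mathcal{P}_K$, I would note that in a non-archimedean setting the open ball $D(x,1) = \{y \in K : |y - x|_p < 1\}$ is precisely the residue class $x + \mathcal{P}_K$, since $|y - x|_p < 1$ is equivalent to $y - x \in \mathcal{P}_K$. The Hensel lift $\zeta_{\bar x}$ constructed above lies in this class by construction, so $\zeta_{\bar x} \in D(x,1)$. Conversely, any $(p^f - 1)$-th root of unity in $D(x,1)$ has reduction $\bar x$, and the uniqueness clause of Hensel's lemma forces it to equal $\zeta_{\bar x}$.

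Since the assertion is classical, I do not anticipate a real obstacle. The only point demanding care is the simplicity of the roots of $\bar P$ in $\mathbb{K}$, which reduces to the coprimality of $p$ and $p^f - 1$; once this is in hand, the identification of the open ball $D(x,1)$ with the residue class $x + \mathcal{P}_K$ makes the bijection between $\mathbb{K}^*$ and the set of $(p^f-1)$-th roots of unity in $\mathcal{O}_K$ completely transparent.
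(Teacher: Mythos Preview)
Your argument is correct and is the standard Hensel's lemma construction of the Teichm\"uller representatives. Note, however, that the paper does not supply a proof of this lemma at all: it simply quotes the result from \cite{AK09}, p.~56, so there is no in-paper proof to compare against. Your write-up would serve perfectly well as a self-contained justification should one be desired.
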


For any integer $k\geq 0$, denote by $M_{k}(z)=z^{p^{kf}}$.
\begin{lemma}\label{mono-lemma1}
Let  $m>n \geq 1 $ be  two positive  integers. Then for any $x\in \mathcal{O}_K$,
\[|M_{m}(x)-M_{n}(x)|_{p}< |\pi|_p.\]
\end{lemma}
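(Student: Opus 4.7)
The plan is to split on whether $x$ lies in the maximal ideal $\mathcal{P}_K$ or is a unit. The non-unit case is essentially trivial: if $x\in\mathcal{P}_K$ then $|x|_p\leq|\pi|_p$, so for every $k\geq 1$ one has $|M_k(x)|_p=|x|_p^{p^{kf}}\leq|\pi|_p^{p^{kf}}\leq|\pi|_p^{\,2}$, using $p^{kf}\geq 2$, and the strong triangle inequality gives $|M_m(x)-M_n(x)|_p\leq|\pi|_p^{\,2}<|\pi|_p$ at once.

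The interesting case is $x\in\mathcal{O}_K\setminus\mathcal{P}_K$. Here I would invoke the preceding lemma to pick the unique $(p^f-1)$-th root of unity $\omega\in D(x,1)$. Since $\omega^{p^f}=\omega$, a straightforward induction on $k$ gives $M_k(\omega)=\omega^{p^{kf}}=\omega$ for all $k\geq 0$, so it suffices to prove the key estimate
\[|M_k(x)-\omega|_p\leq|\pi|_p^{\,2}\qquad(k\geq 1),\]
after which the ultrametric inequality applied to $M_m(x)-M_n(x)=(M_m(x)-\omega)-(M_n(x)-\omega)$ delivers the conclusion. For the key estimate I factor
\[x^{p^{kf}}-\omega^{p^{kf}}=(x-\omega)\sum_{i=0}^{p^{kf}-1}x^{p^{kf}-1-i}\omega^{i}.\]
Since $x\equiv\omega\pmod{\pi}$, each of the $p^{kf}$ summands is congruent to $\omega^{p^{kf}-1}$ modulo $\pi$, so the whole sum is congruent to $p^{kf}\omega^{p^{kf}-1}$ modulo $\pi$, and this lies in $\mathcal{P}_K$ because $p\in\mathcal{P}_K$. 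Hence the sum has $p$-adic norm at most $|\pi|_p$, and combined with $|x-\omega|_p\leq|\pi|_p$ this yields the claimed bound.

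The main obstacle is exactly the one-line congruence observation above: noticing that the inner sum has precisely $p^{kf}$ terms, a multiple of $p$, which when combined with $x\equiv\omega\pmod{\pi}$ lifts the naive bound $|\pi|_p$ to $|\pi|_p^{\,2}$ and thereby produces the strict inequality required. Everything else in the argument is bookkeeping with the strong triangle inequality, so the whole proof should fit into a few lines once this observation is in place.
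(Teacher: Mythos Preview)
Your proof is correct and follows the same overall plan as the paper: split according to whether $x\in\mathcal{P}_K$ or $x$ is a unit, and in the unit case pass to the $(p^f-1)$-th root of unity $\omega$ congruent to $x$ modulo $\pi$. The only difference is the algebra in the unit case. The paper writes $x=\xi+y$ with $y\in\pi\mathcal{O}_K$, expands $(\xi+y)^{p^{kf}}$ via the binomial theorem, and invokes $p\mid\binom{p^{kf}}{i}$ for $0<i<p^{kf}$ to bound each nonconstant term by $|p|_p|y|_p<|\pi|_p$. You instead factor $x^{p^{kf}}-\omega^{p^{kf}}=(x-\omega)\sum_{i}x^{p^{kf}-1-i}\omega^{i}$ and note that all $p^{kf}$ summands are congruent modulo $\pi$, forcing the sum into $\pi\mathcal{O}_K$. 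Both tricks give the same estimate $|M_k(x)-\omega|_p\le|\pi|_p^{\,2}$; your route is marginally tidier in that it sidesteps the small imprecision in the paper's phrasing (the divisibility $p\mid\binom{p^{kf}}{i}$ fails at $i=p^{kf}$, where the term $y^{p^{kf}}$ has to be controlled separately).
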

\begin{proof}
First assume that  $x\in \pi\mathcal{O}_{K}$. Then $|M_{m}(x)-M_{n}(x)|_{p}=|x|_{p}^{p^{nf}}<|\pi|_p$.

Secondly, assume  $x\in \mathcal{O}_{K}\setminus \pi\mathcal{O}_K.$ There exists a $p^{f}-1$-th root $\xi$ of unity
such that $x=\xi+y$ for some $y\in \pi \mathcal{O}_{K}$. Observing that  $\xi^{p^{mf}}=\xi^{p^{nf}}=\xi$ , we have
\begin{align*}
M_{m}(x)-M_{n}(x)&=\sum_{i=1}^{p^{mf}}C_{p^{mf}}^{i}y^{i}\xi^{p^{mf}-i}  +\sum_{i=1}^{p^{nf}} C_{p^{nf}}^{i}y^{i}\xi^{p^{nf}-i}.
\end{align*}
Note that $p\mid C_{p^{kf}}^{i}$ for each positive integer $k\geq 1$. Since  $y\in \pi \mathcal{O}_{K},$ we have
\[|M_{m}(x)-M_{n}(x)|_p< |y|_p\leq |\pi|_p.\]
\end{proof}

\begin{lemma}\label{mono-lemma2}
Let  $m>n\geq 1$ be two positive integers. Then for  $x,y \in \mathcal{O}_{K}$ with $|x-y|_p<1$, we have
\[|M_{m}(x)-M_{m}(y)|_p<|M_{n}(x)-M_{n}(y)|_p<|x-y|_p.\]
\end{lemma}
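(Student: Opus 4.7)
The plan is to exploit the composition identity $M_{k+1}(z)=M_k(z)^{p^f}$, which reduces both inequalities to a single contraction statement for the map $w\mapsto w^{p^f}$. More precisely, it suffices to prove the following claim: if $u,v\in\mathcal{O}_K$ satisfy $|u-v|_p<1$, then
\[
|u^{p^{f}}-v^{p^{f}}|_p<|u-v|_p.
\]
Indeed, setting $u=M_{k-1}(x)$ and $v=M_{k-1}(y)$ gives $u^{p^f}=M_k(x)$ and $v^{p^f}=M_k(y)$, so the claim, once iterated, yields the desired chain $|M_m(x)-M_m(y)|_p<|M_n(x)-M_n(y)|_p<|x-y|_p$.

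To establish the claim, I would expand using the binomial theorem:
\[
u^{p^{f}}-v^{p^{f}}=\sum_{i=1}^{p^{f}}\binom{p^{f}}{i}v^{p^{f}-i}(u-v)^{i}.
\]
For $1\le i\le p^f-1$ we have $p\mid\binom{p^f}{i}$, hence $|\binom{p^{f}}{i}|_p\le|p|_p<1$; combined with $|v|_p\le 1$ and $|(u-v)^i|_p\le|u-v|_p$, each such term is strictly smaller than $|u-v|_p$. The remaining term $(u-v)^{p^f}$ has absolute value $|u-v|_p^{p^f}$, which is also $<|u-v|_p$ since $|u-v|_p<1$ and $p^f\ge 2$. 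The ultrametric inequality then gives the claim.

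The final step is induction on $k$: starting from the hypothesis $|M_0(x)-M_0(y)|_p=|x-y|_p<1$, the claim propagates the condition $|M_k(x)-M_k(y)|_p<1$ to the next level while strictly decreasing the distance. Applying this from level $0$ up to $n$ and then up to $m$ gives the two strict inequalities in the statement. I do not foresee a genuine obstacle here; the only point one must be careful about is verifying that the intermediate iterates $M_k(x), M_k(y)$ stay in $\mathcal{O}_K$ and within distance $<1$ of each other so that the claim can be reapplied, and both facts follow at once from $\mathcal{O}_K$ being a ring and the strict decrease of distances.
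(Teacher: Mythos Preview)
Your proof is correct and follows the same inductive strategy as the paper: reduce everything to the one-step contraction $|u^{p^{f}}-v^{p^{f}}|_p<|u-v|_p$ for $u,v\in\mathcal{O}_K$ with $|u-v|_p<1$, then iterate from level $0$ up to $n$ and from $n$ up to $m$. The only difference is in the algebraic identity used for that one step: the paper factors $u^{p^{f}}-v^{p^{f}}=(u-v)\sum_{i=0}^{p^{f}-1}u^{i}v^{p^{f}-1-i}$ and notes that the $p^{f}$ summands are all congruent modulo $\pi$ (since $u\equiv v$), so the sum lies in $\pi\mathcal{O}_K$; you instead expand $(v+(u-v))^{p^{f}}$ by the binomial theorem and bound each term using $p\mid\binom{p^{f}}{i}$ for $0<i<p^{f}$. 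Both arguments are standard, equally short, and give the same conclusion.
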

\begin{proof}Since   $M_{k+1}(z)=M_{k}(z)^{p^{f}}$, we can write
\begin{align}\label{eq-mono1}
M_{k+1}(x)-M_{k+1}(y)=\Big(M_{k}(x)-M_{k}(y)\Big)\sum_{i=0}^{p^{f}-1} M_{k}(x)^{i}M_{k}(y)^{p^{f}-i}.
\end{align}
Since $|x-y|_p<1$, it is easy to see  $|M_{k}(x)-M_{k}(y)|_p<1,$
which implies that
\begin{align}\label{eq-mono2}
 \sum_{i=0}^{p^{f}-1} M_{k}(x)^{i}M_{k}(y)^{p^{f}-i}\in \pi \mathcal{O}_{K}.
\end{align}
Hence, \eqref{eq-mono1} and \eqref{eq-mono2} implies
\[|M_{k+1}(x)-M_{k+1}(y)|_p <|M_{k}(x)-M_{k}(y)|_p.\]
 By induction on $m$,  for $m>n$ we have
\[|M_{m}(x)-M_{m}(y)|_p<|M_{n}(x)-M_{n}(y)|_p.\]
\end{proof}

\begin{corollary}\label{mono-cor}
Let $x,y \in \mathcal{O}_{K}$ with $|x-y|_p<1$.
For $k\geq 1$,
\[|M_{k}(x)-M_{k}(y)|_p\leq |\pi|_p^{k}  |x-y|_p.\]
\end{corollary}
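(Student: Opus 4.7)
The plan is to prove the bound by induction on $k$, leveraging the factorization identity \eqref{eq-mono1} and the divisibility fact \eqref{eq-mono2} that were established in the proof of Lemma~\ref{mono-lemma2}.

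For the base case $k=1$, I would write $M_1(x)-M_1(y)=x^{p^f}-y^{p^f}$ and apply the standard factorization
\[x^{p^f}-y^{p^f}=(x-y)\sum_{i=0}^{p^{f}-1}x^{i}y^{p^{f}-1-i}.\]
The cofactor sum lies in $\pi\mathcal{O}_{K}$ by exactly the reasoning used to justify \eqref{eq-mono2}: either both $x,y\in \pi\mathcal{O}_K$, in which case every term of the sum is divisible by $\pi$; or $x,y$ reduce to the same unit $\bar{\xi}\in\mathbb{F}_{p^f}^{*}$, in which case each of the $p^f$ terms reduces to $\bar\xi^{\,p^{f}-1}=1$, giving $p^f\cdot 1\equiv 0\pmod{\pi}$ since $p\in\pi^e\mathcal{O}_K$. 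This yields $|M_1(x)-M_1(y)|_p\leq |\pi|_p\,|x-y|_p$.

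For the inductive step, assume the bound for $k$. Relation \eqref{eq-mono1} gives
\[M_{k+1}(x)-M_{k+1}(y)=\bigl(M_{k}(x)-M_{k}(y)\bigr)\cdot S_k,\]
where $S_k=\sum_{i=0}^{p^{f}-1}M_{k}(x)^{i}M_{k}(y)^{p^{f}-i}$. Because $|x-y|_p<1$ implies $|M_k(x)-M_k(y)|_p<1$, the proof of Lemma~\ref{mono-lemma2} showed $S_k\in\pi\mathcal{O}_K$, hence $|S_k|_p\leq|\pi|_p$. Combining with the induction hypothesis,
\[|M_{k+1}(x)-M_{k+1}(y)|_p\leq |\pi|_p\cdot |M_{k}(x)-M_{k}(y)|_p\leq |\pi|_p\cdot |\pi|_p^{k}|x-y|_p=|\pi|_p^{k+1}|x-y|_p,\]
which closes the induction.

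There is no real obstacle here; the corollary is essentially a quantitative refinement of Lemma~\ref{mono-lemma2}. The only subtle ingredient that merits care is confirming that the cofactor $S_k$ picks up a factor of $\pi$ (rather than merely being an integer), which relies on $p^f\equiv 0\pmod{\pi}$ together with the $(p^f-1)$-st power Frobenius behaviour on units modulo $\pi$—but this is precisely what was already verified in Lemma~\ref{mono-lemma2}.
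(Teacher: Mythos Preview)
Your proof is correct and is exactly the argument the paper has in mind: the corollary is stated without proof, immediately after Lemma~\ref{mono-lemma2}, because the estimate $S_k\in\pi\mathcal{O}_K$ established there (equation~\eqref{eq-mono2}) gives $|M_{k+1}(x)-M_{k+1}(y)|_p\le|\pi|_p\,|M_k(x)-M_k(y)|_p$, and iterating this from $M_0(z)=z$ yields the bound. Your separate treatment of the base case $k=1$ is just the $k=0$ instance of the same factorization, so there is no meaningful difference from the intended proof.
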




\begin{lemma} \label{frac-lemma}
Let $x,y \in \mathcal{O}_K$ and let $u,v\in \pi \mathcal{O}_k$.
If $|u-v|_p\leq |x-y|_p$, then  we have
\[\Big{|}\frac{x}{1+u}-\frac{y}{1+v}\Big{|}_p\leq |x-y|_p.\]
	Moreover, if $|u-v|_p< |x-y|_p$, the equality holds.
\end{lemma}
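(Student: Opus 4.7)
The plan is to bring the two fractions over a common denominator, observe that the denominator has $p$-adic absolute value one, and then massage the numerator into two pieces whose sizes we can control separately by the ultrametric inequality.

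First I would write
\[
\frac{x}{1+u}-\frac{y}{1+v}=\frac{x(1+v)-y(1+u)}{(1+u)(1+v)}.
\]
Because $u,v\in\pi\mathcal{O}_K$, both $|u|_p$ and $|v|_p$ are strictly less than $1$, so $|1+u|_p=|1+v|_p=1$ and the denominator contributes nothing to the absolute value. It therefore suffices to estimate the numerator $N=x(1+v)-y(1+u)=(x-y)+(xv-yu)$.

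Next I would rewrite the cross-term $xv-yu$ by adding and subtracting $xu$ (or equivalently $yv$), obtaining $xv-yu=-x(u-v)+u(x-y)$. Substituting gives the clean identity
\[
N=(x-y)(1+u)-x(u-v).
\]
Now the ultrametric inequality finishes everything: since $|1+u|_p=1$ and $|x|_p\leq 1$,
\[
|N|_p\leq \max\bigl(|x-y|_p,\, |u-v|_p\bigr),
\]
which under the hypothesis $|u-v|_p\leq|x-y|_p$ is at most $|x-y|_p$. This gives the first assertion.

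For the equality case, when $|u-v|_p<|x-y|_p$ the two terms $(x-y)(1+u)$ and $x(u-v)$ in $N$ have strictly different $p$-adic absolute values (namely $|x-y|_p$ and at most $|u-v|_p$), so the strong form of the ultrametric inequality forces $|N|_p=|x-y|_p$, hence equality in the conclusion. There is no real obstacle here; the only point requiring care is to pick the right algebraic regrouping of $xv-yu$ so that one of the resulting summands is $(x-y)(1+u)$, making the $|1+u|_p=1$ cancellation visible and the separation of scales clean for the equality case.
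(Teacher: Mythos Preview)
Your proof is correct and follows essentially the same route as the paper's: both reduce to the numerator via $|1+u|_p=|1+v|_p=1$, rewrite it as $(x-y)(1+u)+x(v-u)$, and then invoke the ultrametric inequality (and its strong form for the equality case). The only cosmetic difference is that the paper skips the intermediate line $N=(x-y)+(xv-yu)$ and jumps directly to the key regrouping.
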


\begin{proof}Since $u,v\in \pi \mathcal{O}_k$, we have $|1+u|_p=|1+v|_p=1$. Hence,
\begin{align*}
\Big|\frac{x}{1+u}-\frac{y}{1+v}\Big|_p =  |x(1+v)-y(1+u)|_p=|x(1+u)-y(1+u)+x(v-u)|_p.
\end{align*}
The inequality follows, because $|x(v-u)|_p\leq   |x-y|_p$.

If $|u-v|_p< |x-y|_p$, then $|x(v-u)|_p<  |x-y|_p$. Hence, we get the equality.
\end{proof}




\section{Rational  maps with empty Fatou set}\label{finite-extension-version}

Let $K$ be a finite extension of $\Q_p$ and let $\phi\in K(z)$ be a rational map of degree at least $2$.
Then $\phi$ induce a continuous map from the projective line $\PK$ over $K$ to itself.
Here, we equip the projective line $(\PK)$ with the spherical metric $\rho(\cdot,\cdot)$, i.e.
for $P=[x_1,y_1], Q=[x_2,y_2]\in  \PK$,
 \[\rho(P,Q)=\frac{|x_1y_2-x_2y_1|_p}{\max\{|x_1|_{p},|y_1|_{p}\}\max\{|x_2|_{p},|y_{2}|_{p}\}}.\]

Consider the dynamical system $(\PK,\phi)$.
The \textit{Fatou set} $\mathcal{F}(\phi)$ of the system is defined to be the subset of $\PK$ consisting of the points having a neighborhood on which the family of iterates $\{\phi^n\}_{n\ge 0}$ is equicontinuous with respect to the spherical metric. The \textit{Julia set} of the system $(\PK,\phi$ is the complement of $\mathcal{F}(\phi)$ in $\PK$, denoted by $\mathcal{F}(\phi)$.

In this section, we show that there exist rational maps in $K(z)$ with empty Fatou set for  their induced  dynamical systems on $\PK$.

The following lemma shows that $\phi$ is locally scaling on $\mathcal{O}_K$.
\begin{lemma}
For  $x,y \in \mathcal{O}_K$ with $|x-y|_p<1$, we have
\[|\phi(x)-\phi(y)|_p=p^{1/e}|x-y|_p.\]
\end{lemma}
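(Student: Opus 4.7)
The plan is to rewrite $\phi$ in the form $\frac{X(x)}{1+u(x)}$ to bring it into the scope of Lemma \ref{frac-lemma}, then feed in the sharp estimates from Lemma \ref{mono-lemma1} and Corollary \ref{mono-cor}. Let me set $A(x)=x^{p^f}-x=M_1(x)-x$ and $B(x)=\pi+M_m(x)-M_n(x)$, and define $u(x):=(M_m(x)-M_n(x))/\pi$. By Lemma \ref{mono-lemma1} we have $|M_m(x)-M_n(x)|_p<|\pi|_p$, so $u(x)\in\mathcal{P}_K=\pi\mathcal{O}_K$ and $B(x)=\pi(1+u(x))$. A Fermat-type argument in the residue field $\mathbb{F}_{p^f}$ gives $x^{p^f}\equiv x\pmod{\pi}$ for $x\in\mathcal{O}_K$, so $A(x)\in\pi\mathcal{O}_K$ and $A(x)/\pi\in\mathcal{O}_K$. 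Thus
\[
\phi(x)=\frac{A(x)/\pi}{1+u(x)},
\]
which is exactly the shape needed for Lemma \ref{frac-lemma} with ``$x$''$=A(x)/\pi$, ``$u$''$=u(x)$.

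Next I would compute the two relevant ultrametric differences. For the numerator, write $A(x)-A(y)=(M_1(x)-M_1(y))-(x-y)$; Corollary \ref{mono-cor} (with $k=1$) gives $|M_1(x)-M_1(y)|_p\le|\pi|_p\,|x-y|_p<|x-y|_p$, so the ultrametric inequality yields $|A(x)-A(y)|_p=|x-y|_p$ and therefore
\[
\bigl|A(x)/\pi-A(y)/\pi\bigr|_p=p^{1/e}\,|x-y|_p.
\]
For the denominator term, Corollary \ref{mono-cor} applied to $M_m$ and $M_n$ gives $|M_k(x)-M_k(y)|_p\le|\pi|_p^{k}\,|x-y|_p$ for $k=m,n$, so after dividing by $\pi$ one obtains $|u(x)-u(y)|_p\le|\pi|_p^{n-1}|x-y|_p\le|x-y|_p$, which in particular is strictly less than $p^{1/e}|x-y|_p=|A(x)/\pi-A(y)/\pi|_p$.

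Finally I apply Lemma \ref{frac-lemma} with these data: the hypothesis $|u(x)-u(y)|_p<|A(x)/\pi-A(y)/\pi|_p$ is satisfied, so we are in the equality case and
\[
|\phi(x)-\phi(y)|_p=\bigl|A(x)/\pi-A(y)/\pi\bigr|_p=p^{1/e}\,|x-y|_p,
\]
which is the claim. The only place requiring care is verifying that the perturbation term $u(x)-u(y)$ is strictly smaller than the leading term $A(x)/\pi-A(y)/\pi$, so that Lemma \ref{frac-lemma} upgrades from an inequality to an equality; but the factor $|\pi|_p^{n-1}\le 1$ versus the factor $p^{1/e}>1$ makes this comparison comfortable. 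No step appears to be a genuine obstacle—the proof is essentially a bookkeeping exercise once the decomposition $\phi=(A/\pi)/(1+u)$ is identified.
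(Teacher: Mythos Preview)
Your proof is correct and follows essentially the same route as the paper: rewrite $\phi$ so that Lemma~\ref{frac-lemma} applies, use Lemma~\ref{mono-lemma1} to place the perturbation $u(x)$ in $\pi\mathcal{O}_K$, and use Corollary~\ref{mono-cor} to control both the numerator and denominator differences. The only cosmetic difference is that you absorb the factor $1/\pi$ into the numerator (working with $A(x)/\pi$) whereas the paper pulls it out in front; your choice makes the strict inequality needed for the equality case of Lemma~\ref{frac-lemma} slightly cleaner, since you only need $|u(x)-u(y)|_p\le|x-y|_p<p^{1/e}|x-y|_p$ rather than a strict bound on $|u(x)-u(y)|_p$ against $|x-y|_p$ itself.
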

\begin{proof}
Recalling that $M_{k}(z)=z^{p^{kf}}$, we have
\begin{align*}
\phi(x)-\phi(y)= \frac{1}{\pi}\cdot \left(\frac{M_1(x)-x}{1+\frac{M_{m}(x)-M_{n}(x)}{\pi}}- \frac{M_1(y)-y}{1+\frac{M_{m}(y)-M_{n}(y)}{\pi}} \right).
\end{align*}
By Lemma \ref{mono-lemma2}, under the assumption $|x-y|_p<1$,  we have
\begin{align}\label{ineq-mono1}
\Big|\frac{M_{m}(x)-M_{n}(x)}{\pi}-\frac{M_{m}(y)-M_{n}(y)}{\pi}\Big|_p=\Big|\frac{M_{n}(x)-M_{n}(y)}{\pi}\Big|_p
\end{align}
and
\begin{align}\label{ineq-mono2}
|(M_{1}(x)-x)-(M_{1}(y)-y)|_p=|x-y|_p.
\end{align}
Since $n\geq 2$, by Corollary \ref{mono-cor},
\[\Big|\frac{M_{n}(x)-M_{n}(y)}{\pi}\Big|_p<|x-y|_p,\]
which  together with  \eqref{ineq-mono1} and  \eqref{ineq-mono2} implies that
\[\Big|\frac{M_{m}(x)-M_{n}(x)}{\pi}-\frac{M_{m}(y)-M_{n}(y)}{\pi}\Big|_p<|(M_{1}(x)-x)-(M_{1}(y)-y)|_p.\]
By Lemma \ref{mono-lemma1}, we have
\[M_{m}(x)-M_{n}(x), M_{m}(x)-M_{n}(x)\in \pi^2\mathcal{O}_K,\]
which is equivalent to
\[\frac{M_{m}(x)-M_{n}(x)}{\pi}, \frac{M_{m}(x)-M_{n}(x)}{\pi}\in \pi\mathcal{O}_K.\]
Hence, by Lemma \ref{frac-lemma}, we have
\[|\phi(x)-\phi(y)|_p=|1/\pi|_p \cdot |x-y|_p=p^{1/e}|x-y|_p.\]
\end{proof}

\begin{proof}[The proof of Theorem \ref{main-thm}]
Observe  that $m\geq 2$. Then we have
\[\phi(\PK\setminus \mathcal{O}_K)\subset \pi\mathcal{O}_K.\]
The same argument in Lemma \ref{mono-lemma1} implies that
\[\forall x\in \mathcal{O}_{K}, \quad |M_1(x)-x|_p\leq |\pi|_p.\]
Also, by Lemma  \ref{mono-lemma1}, we have
\[\phi(\mathcal{O}_{K})\subset\mathcal{O}_K.\]
Then  we shall determine the symbolic dynamics of the dynamical system  $( \mathcal{O}_{K},\phi)$.
Denote the reduction modulo $\pi$ of $x\in \mathcal{O}_{K}$ by $\overline{x} \in \F_{p^{f}}$ and set
\[ \Sigma_{\F_{p^f}}=(\F_{p^f})^{\N}\]
by which we mean the direct product of a countable  copies of $\F_{p^{f}}$, each with the discrete topology.
Now we shall show that $(\mathcal{O}_K,\phi)$ is topologically conjugated to $( \Sigma_{\F_{p^f}},\sigma)$, where $\sigma$ is the  shift
map on $\Sigma_{\F_{p^f}}$, i.e. $\sigma:\Sigma_{\F_{p^f}}\to \Sigma_{\F_{p^f}}$ by $(\sigma (x))_i=x_{i+1}$  for all $x=(x_i)\in \Sigma_{\F_{p^f}}$.
Define a map $H:\mathcal{O}_{K}\to \Sigma_{\F_{p^f}} $ by
\[(H(z))_i= \overline{\phi^i(z)}, \quad  \forall z\in \mathcal{O}_K.\]
By the  same argument in \cite[Theorem 1]{WS98},  one can see that  $H$ is a bijective.
It is clear that $H$ is continuous and $\sigma \circ H= \phi\circ H$.

Hence, we obtain that  the Julia set of  dynamical system $(\PK,\phi)$ is the whole space $\PK$.
\end{proof}

\bibliographystyle{siam}

\begin{thebibliography}{10}

\bibitem{AK09}
{\sc V.~Anashin and A.~Khrennikov}, {\em Applied algebraic dynamics}, vol.~49
  of De Gruyter Expositions in Mathematics, Walter de Gruyter \& Co., Berlin,
  2009.

\bibitem{Benedetto-Fatou-Component}
{\sc R.~L. Benedetto}, {\em Fatou components in p-adic dynamics}, ProQuest LLC,
  Ann Arbor, MI, 1998.
\newblock Thesis (Ph.D.)--Brown University.

\bibitem{BenedettoHM2001ETDS}
\leavevmode\vrule height 2pt depth -1.6pt width 23pt, {\em Hyperbolic maps in
  {$p$}-adic dynamics}, Ergodic Theory Dynam. Systems, 21 (2001), pp.~1--11.

\bibitem{Benedetto-wandering-domain-polynomial}
\leavevmode\vrule height 2pt depth -1.6pt width 23pt, {\em Wandering domains in
  non-{A}rchimedean polynomial dynamics}, Bull. London Math. Soc., 38 (2006),
  pp.~937--950.

\bibitem{FFLW2014}
{\sc A.~Fan, S.~Fan, L.~Liao, and Y.~Wang}, {\em On minimal decomposition of
  {$p$}-adic homographic dynamical systems}, Adv. Math., 257 (2014),
  pp.~92--135.

\bibitem{FFLW2017}
\leavevmode\vrule height 2pt depth -1.6pt width 23pt, {\em Minimality of
  {$p$}-adic rational maps with good reduction}, Discrete Contin. Dyn. Syst.,
  37 (2017), pp.~3161--3182.

\bibitem{FLWZ2007}
{\sc A.~Fan, L.~Liao, Y.~F. Wang, and D.~Zhou}, {\em {$p$}-adic repellers in
  {$\Bbb Q_p$} are subshifts of finite type}, C. R. Math. Acad. Sci. Paris, 344
  (2007), pp.~219--224.

\bibitem{Hsia-periodic-points-closure}
{\sc L.-C. Hsia}, {\em Closure of periodic points over a non-{A}rchimedean
  field}, J. London Math. Soc. (2), 62 (2000), pp.~685--700.

\bibitem{Rivera-Letelier-Dynamique-rationnelles-corps-locaux2003}
{\sc J.~Rivera-Letelier}, {\em Dynamique des fonctions rationnelles sur des
  corps locaux}, Ast\'erisque,  (2003), pp.~xv, 147--230.
\newblock Geometric methods in dynamics. II.

\bibitem{Silverman-dynamics-book}
{\sc J.~H. Silverman}, {\em The arithmetic of dynamical systems}, vol.~241 of
  Graduate Texts in Mathematics, Springer, New York, 2007.

\bibitem{WS98}
{\sc C.~F. Woodcock and N.~P. Smart}, {\em {$p$}-adic chaos and random number
  generation}, Experiment. Math., 7 (1998), pp.~333--342.

\end{thebibliography}

%
%
%
%
%

\end{document}